\tikzstyle{bull}=[circle,draw=black,fill=black!80]
\theoremstyle{plain}
\newtheorem{theorem}{Theorem}[section]
\newtheorem{proposition}[theorem]{Proposition}
\newtheorem{lemma}[theorem]{Lemma}
\newtheorem{corollary}[theorem]{Corollary}
\theoremstyle{definition}
\newtheorem{definition}[theorem]{Definition}
\newtheorem{remark}[theorem]{Remark}
\numberwithin{equation}{section}
\newcommand{\ph}{\varphi}
\newcommand{\p}{\mathcal{P}}
\newcommand{\Fi}{\mathsf{Fi}}
\newcommand{\Uf}{\mathsf{Uf}}
\newcommand{\Id}{\mathsf{Id}}
\newcommand{\Clop}{\mathsf{Clop}}
\newcommand{\Com}{\mathsf{Com}}
\newcommand{\com}{\mathcal{C}om}
\newcommand{\F}{\mathcal{F}}
\begin{document}

\title[A Characterization of the extension of homomorphisms]{A characterization of the canonical extension of Boolean homomorphisms}

\author{Luciano J. Gonz\'alez}
\address{Universidad Nacional de La Pampa. Facultad de Ciencias Exactas y Naturales. 6300 Santa Rosa. Argentina}

\email{lucianogonzalez@exactas.unlpam.edu.ar}

\thanks{This research was partially supported by Consejo Nacional de Investigaciones Cient\'ificas y T\'ecnicas (Argentina) under the grant PIP 112-20150-100412CO; and it was also partially supported by Universidad Nacional de La Pampa (Facultad de Ciencias Exactas y Naturales) under the grant P.I. 64 M, Res. 432/14 CD}

\keywords{Boolean algebra, canonical extension, Stone-\v{C}ech compactification, Stone space}

\subjclass[2010]{Primary 03G05; Secondary 06E15, 06B23}

\begin{abstract} 
This article aims to obtain a characterization of the canonical extension of  Boolean homomorphisms through the Stone-\v{C}ech compactification. Then, we will show that one-to-one homomorphisms and onto homomorphisms extend to one-to-one homomorphisms and onto homomorphisms, respectively. 
\end{abstract}

\maketitle

\section{Introduction}

The concept of canonical extension of Boolean algebras with operators was introduced and studied by J\'onsson and Tarski \cite{JoTa51,JoTa52}. Then, the notion of canonical extension was naturally generalized to the setting of distributive lattices with operators \cite{GeJo94}. Later on, the notion of canonical extension was generalized to the setting of lattices \cite{GeJo00,GeJo04,GeHa01}, and the more general setting of partially ordered sets \cite{DuGePa05}. The theory of completions for ordered algebraic structures, and in particular the theory of canonical extensions, is an important and useful tool to obtain complete relational semantics for several propositional logics such as modal logics, superintuitionistic logics, fragments of substructural logics, etc., see for instance \cite{Bla02,GasaRe00,GeNaVe05,So00,DuGePa05}.

The theory of Stone-\v{C}ech compactifications of discrete spaces and in particular of discrete semigroup spaces has many applications to several branches of mathematics, see the book \cite{HiStr12} and its references. Here we are interested in the characterization of the Stone-\v{C}ech compactification of a discrete space through of the Stone duality for Boolean algebras.

In this paper, we prove a characterization of the canonical extension of a Boolean homomorphism between Boolean algebras \cite{JoTa51} through the Stone-\v{C}ech compactification of discrete spaces and the Stone duality for Boolean algebras. In other words, we present another way to obtain the canonical extension of a Boolean homomorphism using topological tools. Then, we use this characterization to show that one-to-one homomorphisms and onto homomorphisms extent to one-to-one homomorphisms and onto homomorphisms, respectively.

The paper is organized as follows. In Section \ref{sec:duality and extension} we present the categorical dual equivalence between Boolean algebras and Stone spaces, and the principal results about the canonical extension for Boolean algebras. In Section \ref{sec:compactification} we briefly present the theory of Stone-\v{C}ech compactifications. Lastly, Section \ref{sec:characterization} is the main section of the paper, and there we develop the characterization of the canonical extension of Boolean homomorphisms.

\section{Duality theory and canonical extension}\label{sec:duality and extension}

We assume that the reader is familiar with the theory of lattices and Boolean algebras, see for instance \cite{DaPri02,Ko89} . We establish some notations and results that we will need for what follows. Given a lattice $L$, we denote by $\Fi(L)$ and $\Id(L)$ the collections of all filters and all ideals of $L$, respectively; and for a Boolean algebra $B$, we denote by $\Uf(B)$ the collection of all ultrafilters of $B$.

Let $L$ be a lattice. A \textit{completion} of $L$ is pair $\langle C,e\rangle$ such that $C$ is a complete lattice and $e\colon L\to C$ is a lattice embedding. A completion $\langle C,e\rangle$ of $L$ is said to be \textit{dense} if for every $c\in C$,
\begin{itemize}
	\item[] $c=\bigvee\left\{\bigwedge e[F]: F\in\Fi(L), \bigwedge e[F]\leq c\right\}$ and
	\item[] $c=\bigwedge\left\{\bigvee e[I]: I\in\Id(L), c\leq\bigvee e[I]\right\}$.
\end{itemize}
A completion $\langle C,e\rangle$ of $L$ is said to be \textit{compact} when for every $F\in\Fi(L)$ and every $I\in\Id(L)$, if $\bigwedge e[F]\leq\bigvee e[I]$, then $F\cap I\neq\emptyset$.

\begin{theorem}[\cite{GeHa01}]
For every lattice $L$, there exists a unique, up to isomorphism, dense and compact completion $\langle C,e\rangle$.
\end{theorem}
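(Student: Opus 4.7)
The plan is to prove existence and uniqueness separately. For existence I would construct $C$ as the complete lattice arising from a Galois polarity between $\Fi(L)$ and $\Id(L)$: take the relation $R$ given by $F\,R\,I$ iff $F \cap I \neq \emptyset$, form the two dual polar operators on $\mathcal{P}(\Fi(L))$ and $\mathcal{P}(\Id(L))$, and let $C$ be the complete lattice of Galois-closed subsets on the filter side. Embed $L$ into $C$ by sending $a \in L$ to $e(a) = \{F \in \Fi(L) : a \in F\}$; a direct check using the polarity shows that $e(a)^{R} = \{I \in \Id(L) : a \in I\}$, so $e(a)$ is Galois-closed, and $e$ preserves finite meets and joins because filters and ideals are closed under them.

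Density is essentially built into the closure construction: every Galois-closed set is the join of the closures of the principal filters it dominates, each of which has the form $\bigwedge e[F]$ for a suitable $F\in \Fi(L)$, and the dual formulation via ideals is analogous. Compactness is likewise immediate from the relation: if $\bigwedge e[F] \leq \bigvee e[I]$ holds in $C$, the Galois polarity forces $F\,R\,I$, i.e.\ $F \cap I \neq \emptyset$.

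For uniqueness, let $\langle C_1, e_1\rangle$ and $\langle C_2, e_2\rangle$ be two dense and compact completions. Define $\varphi \colon C_1 \to C_2$ by
\[
\varphi(c) \;=\; \bigvee \left\{\bigwedge e_2[F] : F \in \Fi(L),\ \bigwedge e_1[F] \leq c\right\}.
\]
The crucial bridge lemma is that for each $F\in\Fi(L)$ and $I\in\Id(L)$, the three conditions $\bigwedge e_1[F] \leq \bigvee e_1[I]$, $F \cap I \neq \emptyset$, and $\bigwedge e_2[F] \leq \bigvee e_2[I]$ are equivalent; the implication $(\Rightarrow)$ in each case is compactness of the relevant $C_i$, and $(\Leftarrow)$ follows because $e_i$ is a lattice homomorphism. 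Using this lemma, $\varphi(c)$ coincides with the dual expression $\bigwedge \{\bigvee e_2[I] : c \leq \bigvee e_1[I]\}$, so $\varphi$ is well-defined, monotone, and extends $e_2 \circ e_1^{-1}$; a symmetric map $\psi\colon C_2\to C_1$ is obtained by swapping the roles of $C_1$ and $C_2$, and the density clauses in both completions force $\psi\circ\varphi = \mathrm{id}_{C_1}$ and $\varphi\circ\psi = \mathrm{id}_{C_2}$. Preservation of arbitrary joins and meets then follows from the two density expressions.

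The main obstacle is the uniqueness half, and specifically the bridge lemma equating the filter-based and ideal-based descriptions of $\varphi(c)$: this is exactly the point where density in one completion must cooperate with compactness in the other. Without it one cannot show that the natural candidate $\varphi$ is even well-defined, let alone bijective; existence, by contrast, is essentially a bookkeeping exercise once the polarity is set up correctly.
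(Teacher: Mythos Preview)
The paper does not prove this theorem: it is stated with a citation to \cite{GeHa01} and no proof environment follows, so there is nothing in the paper to compare your argument against. Your sketch is, in fact, essentially the construction and uniqueness argument given in the cited Gehrke--Harding paper (polarity between filters and ideals for existence; the compactness/density interplay for uniqueness), so in that sense you have reconstructed the intended proof.

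One point in your uniqueness sketch deserves tightening. You assert that the bridge lemma alone makes the join description of $\varphi(c)$ coincide with the dual meet description $\bigwedge\{\bigvee e_2[I] : c \leq \bigvee e_1[I]\}$. The bridge lemma gives only the inequality
\[
\bigvee\bigl\{\textstyle\bigwedge e_2[F] : \bigwedge e_1[F]\le c\bigr\}\ \le\ \bigwedge\bigl\{\textstyle\bigvee e_2[I] : c\le \bigvee e_1[I]\bigr\};
\]
the reverse inequality genuinely needs density of $C_1$ (to write $c$ simultaneously as a join of filter-meets and a meet of ideal-joins) together with compactness of $C_2$. You do invoke density later, for $\psi\circ\varphi=\mathrm{id}$, but it is already needed at this earlier step to show the two formulas for $\varphi$ agree. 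This is not a fatal gap---the ingredients are all on the table---but as written the logical dependency is misplaced.
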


\begin{definition}[\cite{GeHa01}]
Let $L$ be a lattice. The unique dense and compact completion of $L$ is called the \textit{canonical extension} of $L$ and it is denoted by $L^{\sigma}$.
\end{definition}

The following results were obtained through the theory of topological representation for distributive lattices and Boolean algebras.

\begin{proposition}[\cite{GeJo94}]
Let $L$ be a bounded distributive lattice. Then, the canonical extension of $L$ is a completely distributive algebraic lattice.
\end{proposition}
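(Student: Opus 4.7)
The plan is to realize $L^{\sigma}$ concretely as a lattice of upsets coming from the prime filter representation of $L$, and then to read off the two properties from that description. Let $X=X(L)$ denote the set of prime filters of $L$ ordered by inclusion, let $\mathcal{U}(X)$ be the complete lattice of upsets of $X$ (ordered by inclusion; meet is $\bigcap$ and join is $\bigcup$), and define $e\colon L\to \mathcal{U}(X)$ by $e(a)=\{P\in X : a\in P\}$. Since $L$ is distributive, the prime filter theorem ensures that $e$ is a bounded lattice embedding, so $\langle \mathcal{U}(X),e\rangle$ is a completion of $L$.

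The next step is to verify that this completion is both dense and compact; by the uniqueness theorem above, it will then be isomorphic to $L^{\sigma}$. For compactness, suppose $F\in\Fi(L)$ and $I\in\Id(L)$ satisfy $\bigcap e[F]\subseteq \bigcup e[I]$; if $F\cap I=\emptyset$, Zorn's lemma produces a prime filter $P$ extending $F$ and disjoint from $I$, giving $P\in\bigcap e[F]\setminus\bigcup e[I]$, a contradiction. For density, the key observation is that $\uparrow P=\bigcap e[P]$ for each $P\in X$, so every upset $U$ satisfies $U=\bigcup_{P\in U}\uparrow P=\bigcup_{P\in U}\bigcap e[P]$, which exhibits $U$ as a join of meets of $e[F]$ for filters $F$. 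The dual clause follows symmetrically using prime ideals of $L$.

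With the identification $L^{\sigma}\cong \mathcal{U}(X)$ in hand, complete distributivity is immediate: arbitrary unions and intersections of upsets are again upsets and are computed pointwise in $\mathcal{P}(X)$, so the complete distributive laws valid in $\mathcal{P}(X)$ restrict verbatim to $\mathcal{U}(X)$. For algebraicity, each principal upset $\uparrow P$ is compact in $\mathcal{U}(X)$, since $\uparrow P\subseteq \bigcup_{i}U_{i}$ forces $P\in U_{j}$ for some $j$ and hence $\uparrow P\subseteq U_{j}$. Because every $U\in\mathcal{U}(X)$ is the join of its principal sub-upsets $\{\uparrow P : P\in U\}$, the compact elements form a join-dense subset and $L^{\sigma}$ is algebraic.

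The main technical obstacle is the density step: one must recognize every upset of $X$ simultaneously as a join of meets $\bigcap e[F]$ (for $F\in\Fi(L)$) and as a meet of joins $\bigcup e[I]$ (for $I\in\Id(L)$), which uses distributivity of $L$ in an essential way through the prime filter representation. Once density and compactness are secured, both complete distributivity and algebraicity come almost for free from the set-theoretic nature of the upset lattice.
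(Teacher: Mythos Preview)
Your argument is correct and is essentially the standard one. The paper itself does not prove this proposition; it cites it from \cite{GeJo94} with the remark that it is obtained ``through the theory of topological representation for distributive lattices,'' which is precisely the prime-filter realization $L^{\sigma}\cong\mathcal{U}(X)$ that you carry out.

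Two small points are worth tightening. For the dual density clause, the ideal to use when $P\notin U$ is $I=L\setminus P$: then $P\cap I=\emptyset$, and $U\subseteq\bigcup e[I]$ because any $Q\in U$ with $Q\subseteq P$ would force $P\in U$ by upward closure of $U$, so every $Q\in U$ meets $I$. For algebraicity, the family $\{\uparrow P:P\in U\}$ need not be directed, so to present $U$ as a \emph{directed} join of compact elements one should pass to finite unions of principal upsets (which remain compact in any complete lattice). Neither point affects the validity of your overall argument.
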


\begin{proposition}[{\cite[pp. 908-910]{JoTa51}}]
Let $B$ be a Boolean algebra. Then, the canonical extension of $B$ is an atomic and complete Boolean algebra.
\end{proposition}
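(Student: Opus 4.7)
The plan is to realise $B^{\sigma}$ concretely as the powerset of the Stone space $X=\Uf(B)$ and then invoke the uniqueness of dense, compact completions. Let $e\colon B\to \p(X)$ be the Stone map $e(a)=\{u\in\Uf(B):a\in u\}$; it is a well-known Boolean algebra embedding, and $\p(X)$ is of course a complete, atomic Boolean algebra whose atoms are the singletons $\{u\}$. If we can show that $\langle\p(X),e\rangle$ is a dense and compact completion of $B$, then by the preceding theorem $B^{\sigma}\cong\p(X)$, and the conclusion follows.

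For density, the key observation is that for every ultrafilter $u\in X$ one has $\bigcap e[u]=\{u\}$, because $u$ is maximal. Given $S\in\p(X)$, this yields
\[
S=\bigcup_{u\in S}\{u\}=\bigcup_{u\in S}\bigcap e[u]\subseteq \bigcup\Bigl\{\bigcap e[F]:F\in\Fi(B),\ \bigcap e[F]\subseteq S\Bigr\}\subseteq S,
\]
which gives the first density clause. For the second, fix $u\notin S$ and set $I_{u}=B\setminus u$; this is a (prime) ideal, and an easy check using that $u$ is an ultrafilter shows $\bigcup e[I_{u}]=X\setminus\{u\}\supseteq S$. Intersecting over $u\notin S$ yields exactly $S$, giving the second density clause.

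For compactness, suppose $\bigcap e[F]\subseteq\bigcup e[I]$ with $F\in\Fi(B)$ and $I\in\Id(B)$, and assume toward a contradiction that $F\cap I=\emptyset$. Then $F\cup\{\neg a:a\in I\}$ has the finite intersection property: otherwise there exist $f\in F$ and $a_{1},\dots,a_{n}\in I$ with $f\leq a_{1}\vee\cdots\vee a_{n}$, and since $I$ is an ideal the join lies in $I$, so it lies in $F\cap I$, a contradiction. Hence this family extends to an ultrafilter $u$ with $F\subseteq u$ and $u\cap I=\emptyset$, which witnesses $u\in\bigcap e[F]\setminus\bigcup e[I]$, contradicting the hypothesis.

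The only real work is the two verifications above; once they are in place, uniqueness of the dense-compact completion does all the remaining lifting. The point where one must be careful is the compactness step, where one has to pass from a containment of a (possibly infinite) meet under an (infinite) join in $\p(X)$ to a finitary obstruction inside $B$; this is precisely the Boolean-prime-ideal-style argument sketched above, and it is the main conceptual (rather than computational) obstacle.
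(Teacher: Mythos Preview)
Your argument is correct: the Stone map $e$ embeds $B$ into $\p(\Uf(B))$, and your verifications of density and compactness are sound (the compactness step is indeed the ultrafilter/BPI argument, and your handling of the ideal side via $I_u=B\setminus u$ gives the dual density clause cleanly). Invoking the uniqueness theorem then finishes the job.

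As for comparison: the paper does not actually prove this proposition---it is stated with a citation to J\'onsson and Tarski, and a few paragraphs later the paper simply records (again citing \cite{JoTa51}) that $B^{\sigma}$ may be identified with $\langle\p(\Uf(B)),\ph\rangle$. Your proof supplies exactly the details behind that identification, so it is not a different route but rather a fleshing-out of the very fact the paper takes for granted. In that sense your write-up is fully aligned with the paper's framework and could serve as the missing justification.
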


Now we are going to focus on the framework of Boolean algebras. 

Let $B_1$ and $B_2$ be Boolean algebras and let $\langle B_1^\sigma,\ph_1\rangle$ and $\langle B_2^\sigma,\ph_2\rangle$ be their canonical extensions, respectively. Let $h\colon B_1\to B_2$ be an order-preserving map. The \textit{canonical extension} of $h$ is the map $h^\sigma\colon B_1^{\sigma}\to B_2^\sigma$ defined by (see \cite{JoTa51})
\begin{equation}\label{equa:def of ext map}
h^\sigma(u)=\bigvee\left\{\bigwedge\{\ph_2(h(a)): a\in F\}: F\in\Fi(B_1), \bigwedge\ph_1[F]\leq u\right\} 
\end{equation}
for every $u\in B_1^\sigma$. The map $h^\sigma$ is order-preserving and extends the map $h$, that is, for every $a\in B_1$, $h^\sigma(\ph_1(a))=\ph_2(h(a))$. The notion of canonical extension for order-preserving maps was generalized to more general settings, for instance, for distributive lattices \cite{GeJo94,GeJo00,GeJo04}, for lattices \cite{GeHa01} and for partially ordered sets \cite{DuGePa05}.

It is well known \cite[Chapter 11]{DaPri02} that the category \textbf{BA} of Boolean algebras and Boolean homomorphisms is dually equivalent to the category \textbf{BS} of Stone spaces (also called Boolean spaces) and continuous maps. Let us describe the contravariant functors $(.)_*\colon\mathbf{BA}\to\mathbf{BS}$ and $(.)^*\colon\mathbf{BS}\to\mathbf{BA}$. Let $B$ be a Boolean algebra. Let $\ph\colon B\to\p\left(\Uf(B)\right)$ be the map define as follows: for $a\in B$,
\begin{equation}\label{equa:phi}
\ph(a)=\{u\in\Uf(B): a\in u\}.
\end{equation}
Then $B_*:=\langle\Uf(B),\tau_B\rangle$ where $\tau_B$ is the topology on $\Uf(B)$ generated by the base $\{\ph(a): a\in B\}$. If $h\colon B_1\to B_2$ is a Boolean homomorphism, then the dual $h_*\colon\Uf(B_2)\to\Uf(B_1)$ is defined as $h_*:=h^{-1}$. Let $X$ be a Stone space and let $\Clop(X)$ be the collection of all clopen subsets of $X$. Then $X^*:=\langle\Clop(X),\cap,\cup,{}^c,\emptyset,X\rangle$. If $f\colon X_1\to X_2$ is a continuous map, then $f^*\colon\Clop(X_2)\to\Clop(X_1)$ is defined as  $f^*:=f^{-1}$.

It was shown in \cite{JoTa51} that the canonical extension of a Boolean algebra $B$ is up to isomorphism $\langle \p(\Uf(B)),\ph\rangle$, where $\ph$ is defined by \eqref{equa:phi}. From now on, we will identify the canonical extension $B^\sigma$ of a Boolean algebra $B$ with $\langle\p(\Uf(B),\ph\rangle$\label{canonical ext}. Hence, by \eqref{equa:def of ext map}, the canonical extension of an order-preserving map $h\colon B_1\to B_2$ becomes
\begin{equation}\label{equa:def ext map 2}
h^\sigma(A)=\bigcup\left\{\bigcap\{\ph_2(h(a)): a\in F\}: F\in\Fi(B_1), \bigcap\ph_1[F]\subseteq A\right\}
\end{equation}
for every $A\in\p(\Uf(B_1))$.

Now, since $\p(\Uf(B))$ is in fact a Boolean algebra, we can consider its dual Stone space $\p(\Uf(B))_*$ and the map $\widehat{\ph}\colon\p(\Uf(B))\to\p(\p(\Uf(B))_*)$ defined as follows
\[
\widehat{\ph}(A)=\{\nabla\in\p(\Uf(B))_*: A\in\nabla\}.
\]

\section{The Stone-\v{C}ech compactification}\label{sec:compactification}

Our main references for the concepts and results considered in this section are \cite{En89} and \cite{GiMe60}. We assume that the reader is familiar with the theory of general topology.

Let $X$ and $Y$ be topological spaces. A map $f\colon X\to Y$  is said to be a \textit{homeomorphic embedding} if $f\colon X\to f[X]$ is a homeomorphism.

\begin{definition}
Let $X$ be a topological space. A \textit{compactification} of $X$ is a pair $\langle Y,c\rangle$ such that $Y$ is a compact Hausdorff  topological space and $c\colon X\to Y$ is a homeomorphic embedding where $c[X]$ is dense in $Y$.
\end{definition}

\begin{theorem}
A topological space $X$ has a compactification if and only if $X$ is a Tychonoff space.
\end{theorem}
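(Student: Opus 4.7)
The plan is to prove the two directions separately. For the necessity, suppose $X$ admits a compactification $\langle Y,c\rangle$. Since $Y$ is compact Hausdorff, $Y$ is normal, and in particular Tychonoff. The Tychonoff property is hereditary, so the subspace $c[X]\subseteq Y$ is Tychonoff, and since $c\colon X\to c[X]$ is a homeomorphism, $X$ itself is Tychonoff. This direction is immediate from standard facts about compact Hausdorff spaces and the transfer of topological properties under homeomorphism.

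For sufficiency, I would use the classical Tychonoff cube embedding. Let $I=C(X,[0,1])$ denote the set of continuous functions from $X$ into the unit interval, and consider the product space $[0,1]^I$. By Tychonoff's product theorem, $[0,1]^I$ is compact, and it is Hausdorff as a product of Hausdorff spaces. Define the evaluation map $e\colon X\to [0,1]^I$ by $e(x)(f)=f(x)$, and let $Y=\overline{e[X]}$ be the closure of the image in $[0,1]^I$. Then $Y$ is closed in a compact Hausdorff space, hence itself compact Hausdorff, and $e[X]$ is dense in $Y$ by construction. It remains to verify that $e$ is a homeomorphic embedding onto $e[X]$, which then yields a compactification $\langle Y,e\rangle$ of $X$.

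The main obstacle is the Embedding Lemma: showing that $e$ is injective, continuous, and open onto its image. Continuity is routine from the universal property of the product topology, since each composition $\pi_f\circ e=f$ is continuous. Injectivity and openness are precisely where the Tychonoff hypothesis enters. For injectivity, given distinct $x,y\in X$, the $T_1$ property yields an open neighborhood of $x$ not containing $y$, and complete regularity then produces some $f\in I$ with $f(x)\neq f(y)$, so $e(x)\neq e(y)$. For openness onto the image, given $x\in X$ and an open neighborhood $U$ of $x$, complete regularity furnishes $f\in I$ with $f(x)=0$ and $f\equiv 1$ on $X\setminus U$; the basic open set $\pi_f^{-1}\bigl([0,1)\bigr)$ then meets $e[X]$ only inside $e[U]$, showing that $e[U]$ is open in $e[X]$. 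These three properties together establish the sufficiency direction, and the embedding step is the only place where real work is required.
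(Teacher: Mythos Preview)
Your proof is correct and is precisely the classical argument via the Tychonoff cube embedding. Note, however, that the paper itself does not prove this theorem: it is stated in Section~\ref{sec:compactification} as a background result, with \cite{En89} and \cite{GiMe60} given as references, so there is no ``paper's own proof'' to compare against. Your argument is the standard one found in those references (the Embedding Lemma combined with Tychonoff's theorem for the sufficiency, and heredity of complete regularity for the necessity), so there is nothing to add.
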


Compactifications of a space $X$ will be denoted by $cX$, that is, $cX$ is a compact Hausdorff  space and $c\colon X\to cX$ is a homeomorphic embedding such that $\overline{c[X]}=cX$ (where $\overline{c[X]}$ is the topological closure of the set $c[X]$ in the space $cX$).

Let $X$ be a topological space. Let us denote by $\Com(X)$ the collection of all compactifications of $X$. Let $c_1X$ and $c_2X$ be two compactifications of $X$.  We say that $c_1X$ and $c_2X$ are \textit{equivalent} if there exists a homeomorphism $f\colon c_1X\to c_2X$ such that $f\circ c_1=c_2$. It is clear that the relation of ``being equivalent'' on $\Com(X)$ is an equivalence relation. Let us denote by $\com(X)$ the set of all equivalence classes. In the sequel we shall identify equivalent compactifications; any class of equivalent compactifications will be considered as a single compactification and denoted by $cX$, where $cX$ is an arbitrary compactification in this class.

Now we define the binary relation $\leq$ on $\com(X)$ as follows:
\begin{equation}
\begin{split}
c_2X\leq c_1X \quad \text{iff} \quad &\text{there exists a continuous map } f\colon c_1X\to c_2X\\
&\text{ such that } f\circ c_1=c_2.
\end{split}
\end{equation}
It is straightforward to show that $\leq$ is a partial order on $\com(X)$. 

\begin{theorem}
Let $X$ be a topological space. Every non-empty subfamily $\mathcal{C}_0$ of $\com(X)$ has a least upper bound with respect to the order $\leq$ in $\com(X)$.
\end{theorem}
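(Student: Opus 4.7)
The plan is to realize the least upper bound as the closure of a diagonal embedding into a product of the given compactifications. Index the family as $\mathcal{C}_0=\{c_iX:i\in I\}$ (choosing one representative per equivalence class), and let $Y:=\prod_{i\in I}c_iX$, which is compact Hausdorff by Tychonoff's theorem. Define the diagonal map $e\colon X\to Y$ by $e(x)=(c_i(x))_{i\in I}$, and set $\tilde{c}X:=\overline{e[X]}\subseteq Y$ with associated embedding $\tilde{c}:=e$.

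First I would check that $\langle\tilde{c}X,\tilde{c}\rangle$ is a compactification. Injectivity and continuity of $e$ are inherited from any single $c_i$. For openness onto image, fix any $i_0\in I$; since $c_{i_0}\colon X\to c_{i_0}X$ is a homeomorphic embedding, every open $U\subseteq X$ has the form $c_{i_0}^{-1}(V)$ for some open $V\subseteq c_{i_0}X$, and then $e^{-1}(\pi_{i_0}^{-1}(V))=U$ shows the topology on $X$ agrees with the subspace topology induced by $e$. Thus $e$ is a homeomorphic embedding; $\tilde{c}X$ is compact Hausdorff as a closed subset of $Y$; and $e[X]$ is dense in $\tilde{c}X$ by construction.

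Next, for each $i\in I$, the projection $\pi_i\colon Y\to c_iX$ restricted to $\tilde{c}X$ is continuous and satisfies $\pi_i\circ\tilde{c}=c_i$, so $c_iX\leq\tilde{c}X$; hence $\tilde{c}X$ is an upper bound of $\mathcal{C}_0$. To see it is the least such, suppose $dX$ is any upper bound and, for each $i\in I$, fix a continuous $f_i\colon dX\to c_iX$ with $f_i\circ d=c_i$. The universal property of the product yields a continuous map $g\colon dX\to Y$, $g(y)=(f_i(y))_{i\in I}$, satisfying $g\circ d=e$. Then $g[dX]$ is compact, hence closed in $Y$, and contains $g[d[X]]=e[X]$, so $g[dX]\supseteq\overline{e[X]}=\tilde{c}X$; in particular $g$ factors through a continuous map $dX\to\tilde{c}X$ with $g\circ d=\tilde{c}$, giving $\tilde{c}X\leq dX$.

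The construction itself is standard, so the only real friction I anticipate is bookkeeping: verifying that $e$ is genuinely a homeomorphic embedding (the openness argument above is the key point, and it crucially uses that at least one coordinate is already a homeomorphic embedding), and checking that the map $g$ built from an arbitrary upper bound really does land in the closure $\tilde{c}X$ rather than merely in $Y$. Once these two points are in hand, the rest is formal.
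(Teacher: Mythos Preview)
The paper does not prove this theorem; it is quoted as a standard result from general topology (the references given are Engelking and Gillman--Jerison). Your construction via the diagonal embedding into the product $\prod_{i\in I}c_iX$ is exactly the classical proof found in those references, and your verification that $\tilde cX$ is a compactification and an upper bound is fine.

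There is, however, a slip in the least-upper-bound step. You argue that $g[dX]$ is compact, hence closed in $Y$, and contains $e[X]$, so $g[dX]\supseteq\overline{e[X]}=\tilde cX$. That inclusion is correct, but it does not yield the conclusion you draw: for $g$ to factor through $\tilde cX$ you need the \emph{opposite} inclusion $g[dX]\subseteq\tilde cX$. The right argument uses density of $d[X]$ in $dX$ together with continuity of $g$:
\[
g[dX]=g\bigl[\,\overline{d[X]}\,\bigr]\subseteq\overline{g[d[X]]}=\overline{e[X]}=\tilde cX.
\]
(Combining this with your containment even gives $g[dX]=\tilde cX$, though surjectivity is not needed.) You explicitly flagged ``checking that $g$ \dots\ really does land in the closure $\tilde cX$'' as a friction point, but the argument you then supplied proves the wrong direction. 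With this one line corrected, the proof is complete.
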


\begin{corollary}
For every Tychonoff space $X$, there exists in $\com(X)$ the greatest element with respect to $\leq$.
\end{corollary}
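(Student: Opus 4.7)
The plan is to derive this corollary directly from the two preceding results: the existence theorem for compactifications (a space has a compactification iff it is Tychonoff) and the immediately preceding theorem on least upper bounds of subfamilies of $\com(X)$. The strategy is simply to apply the least-upper-bound theorem to the entire family $\com(X)$, after first checking it is non-empty.

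First I would observe that since $X$ is Tychonoff, the characterization theorem for the existence of compactifications guarantees that $\com(X)\neq\emptyset$. Hence $\com(X)$ is itself a non-empty subfamily of $\com(X)$, so the previous theorem applies to $\mathcal{C}_0:=\com(X)$ and produces a least upper bound $\beta X\in\com(X)$ of $\com(X)$ with respect to $\leq$.

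Next I would check that $\beta X$ is the greatest element. By definition of upper bound, $cX\leq\beta X$ for every $cX\in\com(X)$, which is exactly what it means for $\beta X$ to be the greatest element of $\com(X)$ with respect to the partial order $\leq$. Uniqueness of the greatest element follows from antisymmetry of the order $\leq$ (established in the discussion preceding the theorem), so no further argument is needed on that front.

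There is essentially no hard step here; the work has been done in the preceding theorem. The only subtlety worth mentioning is that one must invoke the hypothesis that $X$ is Tychonoff precisely to know that $\com(X)$ is non-empty, so that the non-emptiness hypothesis of the least-upper-bound theorem is actually met. Once that is in place, the corollary is immediate.
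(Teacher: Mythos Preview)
Your argument is correct and is exactly the intended derivation: the paper states this result as an immediate corollary of the preceding theorem without giving a proof, and your reasoning---nonemptiness of $\com(X)$ from the Tychonoff hypothesis, then applying the least-upper-bound theorem to $\mathcal{C}_0=\com(X)$---is precisely the standard way to read it off.
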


\begin{definition}
Let $X$ be a Tychonoff space. The greatest element in $\com(X)$ is called the \textit{Stone-\v{C}ech compactification} of $X$ and it is denoted by $\beta X$.
\end{definition}

The Stone-\v{C}ech compactification has the following important universal mapping property.

\begin{theorem}\label{theo:unique extension}
Let $X$ be a Tychonoff space. If $f\colon X\to Y$ is a continuous map into a compact Hausdorff space $Y$, then there exists a unique continuous map $f^{\beta}\colon\beta X\to Y$ such that $f^{\beta}\circ\beta=f$, see Figure \ref{fig:beta extension}.
\end{theorem}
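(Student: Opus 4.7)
The plan is to exploit the maximality of $\beta X$ in $\com(X)$ by manufacturing, from the given map $f$, an auxiliary compactification of $X$ that lives inside the product $\beta X\times Y$. Concretely, I would define $g\colon X\to \beta X\times Y$ by $g(x)=(\beta(x),f(x))$. Since $\beta\colon X\to \beta X$ is a homeomorphic embedding (because $\beta X\in\Com(X)$) and the first coordinate projection $\pi_1$ satisfies $\pi_1\circ g=\beta$, the map $g$ is itself a homeomorphic embedding. Taking $Z:=\overline{g[X]}\subseteq\beta X\times Y$, we obtain a closed subspace of a compact Hausdorff space, hence a compact Hausdorff space in its own right, in which $g[X]$ is dense by construction. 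Therefore $\langle Z,g\rangle$ is a compactification of $X$.

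Next I would invoke the fact that $\beta X$ is the greatest element of $\com(X)$ under the order $\leq$, which yields a continuous map $\pi\colon \beta X\to Z$ with $\pi\circ\beta=g$. Let $\pi_2\colon\beta X\times Y\to Y$ be the second projection and set $f^{\beta}:=\pi_2\circ\pi\colon\beta X\to Y$. This is continuous as a composition of continuous maps, and
\[
f^{\beta}\circ\beta=\pi_2\circ\pi\circ\beta=\pi_2\circ g=f,
\]
so the required extension exists.

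For uniqueness, suppose $g_1,g_2\colon\beta X\to Y$ are continuous and satisfy $g_i\circ\beta=f$. Then $g_1$ and $g_2$ agree on the dense subset $\beta[X]\subseteq\beta X$. Since $Y$ is Hausdorff, the equalizer $\{u\in\beta X:g_1(u)=g_2(u)\}$ is closed; being closed and dense in $\beta X$, it must equal $\beta X$, whence $g_1=g_2$.

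The crux of the argument is the construction of $Z$: once one sees that the ``graph'' map $g$ turns $f$ into a compactification of $X$, the desired factorization is immediate from the extremal characterization of $\beta X$, and uniqueness is a standard density/Hausdorff argument. The only subtlety to verify carefully is that $g$ really is a homeomorphic embedding with dense image in $Z$, which follows from the analogous property of $\beta$ together with continuity of the projections.
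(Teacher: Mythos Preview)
The paper does not actually supply a proof of this theorem: it is stated in Section~\ref{sec:compactification} as a standard fact about the Stone--\v{C}ech compactification, with the references \cite{En89} and \cite{GiMe60} given at the start of the section. So there is no ``paper's own proof'' to compare against.

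That said, your argument is correct and is essentially the classical proof one finds in those references (e.g.\ Engelking, Theorem~3.6.1). The graph map $g=(\beta,f)$ is indeed a homeomorphic embedding because $\pi_1\circ g=\beta$ forces $g^{-1}=\beta^{-1}\circ\pi_1$ to be continuous on $g[X]$; the closure $Z$ is compact Hausdorff as a closed subspace of the compact Hausdorff product $\beta X\times Y$; and the maximality of $\beta X$ in $\com(X)$ then gives the factorizing map $\pi$. Your uniqueness argument via the closed equalizer in a Hausdorff codomain is standard and correct. Nothing is missing.
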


\begin{figure}
\centering
  \begin{tikzpicture}
\node (X) at (-1,1) {$X$};
\node (Y) at (1,1) {$Y$};
\node (bX) at (-1,-1) {$\beta X$};
\path[->,>=angle 90]
(X) edge node[above] {$f$} (Y);
\path[->,>=angle 90]
(X) edge node[left] {$\beta$} (bX);
\path[->,dashed,>=angle 90]
(bX) edge node[right] {$f^{\beta}$} (Y);
\end{tikzpicture}
\caption{The universal mapping property of the Stone-\v{C}ech compactification.}
\label{fig:beta extension}
\end{figure}

Now we move to consider the Stone-\v{C}ech compactification of discrete spaces. The following result is a useful characterization of the Stone-\v{C}ech compacification of a discrete space.

\begin{proposition}[\cite{En89}]\label{prop:SC-comp discrete spaces}
Let $X$ be a discrete topological space. Then, $\p(X)_*$ is (up to equivalent compactification) the Stone-\v{C}ech compactification of $X$ where $\beta\colon X\to\p(X)_*$ is defined by
\begin{equation}\label{equa:beta}
\beta(x)=\{A\in\p(X): x\in A\}.
\end{equation}
\end{proposition}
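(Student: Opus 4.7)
\emph{Proof plan.} The plan is to exhibit $\langle\p(X)_*,\beta\rangle$ as a compactification of $X$ and to verify for it the universal mapping property of Theorem~\ref{theo:unique extension}; the uniqueness part of that theorem then forces $\p(X)_*$ to be equivalent, as a compactification, to $\beta X$.

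\emph{Building the compactification.} Since $\p(X)$ is a Boolean algebra, its Stone dual $\p(X)_*$ is automatically compact and Hausdorff. For each $x\in X$ the set $\beta(x)$ is the principal ultrafilter of $\p(X)$ generated by $\{x\}$, so $\beta$ is well defined, and it is injective because singletons separate points. Continuity of $\beta$ is automatic from discreteness of $X$. The routine identity
\[
\beta[A]=\ph(A)\cap\beta[X]\qquad(A\subseteq X),
\]
in which $\ph$ denotes the map of \eqref{equa:phi} applied to the Boolean algebra $\p(X)$, simultaneously gives openness of $\beta$ onto its image and density of $\beta[X]$ in $\p(X)_*$ (for density, any nonempty basic clopen $\ph(A)$ forces $A\neq\emptyset$, and any $x\in A$ yields $\beta(x)\in\ph(A)$).

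\emph{The universal mapping property.} Given a continuous $f\colon X\to Y$ into a compact Hausdorff space $Y$, I would define $f^{\beta}\colon\p(X)_*\to Y$ by pushing ultrafilters forward. For $u\in\Uf(\p(X))$ the family $f_*u:=\{B\subseteq Y:f^{-1}[B]\in u\}$ is an ultrafilter of $\p(Y)$, and by compactness and Hausdorffness of $Y$ it converges to a unique point, which is declared to be $f^{\beta}(u)$. The identity $f^{\beta}\circ\beta=f$ is immediate since the pushforward of the principal ultrafilter at $x$ is the principal ultrafilter at $f(x)$, and uniqueness of the extension follows from density of $\beta[X]$ in $\p(X)_*$ combined with the Hausdorffness of $Y$.

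\emph{The main obstacle.} The step that requires genuine work is continuity of $f^{\beta}$. To handle it I would fix $u\in\p(X)_*$ with $f^{\beta}(u)=y$ and a neighborhood $V$ of $y$ in $Y$, use regularity of $Y$ to produce an open $U$ with $y\in U\subseteq\overline{U}\subseteq V$, and put $A:=f^{-1}[U]$. Convergence $f_*u\to y$ then gives $U\in f_*u$, i.e.\ $A\in u$, so $u\in\ph(A)$; for any $v\in\ph(A)$ the same argument shows $f^{\beta}(v)\in\overline{U}\subseteq V$, whence $f^{\beta}[\ph(A)]\subseteq V$. Since the clopens $\ph(A)$ form a base for $\p(X)_*$, this yields the required continuity and completes the verification.
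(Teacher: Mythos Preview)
Your argument is correct: you exhibit $\langle\p(X)_*,\beta\rangle$ as a compactification and then verify the universal property, with the continuity of $f^{\beta}$ handled via regularity of $Y$ and the clopen base $\{\ph(A)\}$. Each step checks out, including the key point that $U\in f_*v$ forces the unique limit $f^{\beta}(v)$ to lie in $\overline{U}$.

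There is, however, nothing to compare against: the paper does not give its own proof of this proposition but simply quotes it from Engelking~\cite{En89}. Your write-up therefore supplies an argument the paper omits. If anything, note that the approach you chose---verifying the universal mapping property directly---is the standard one found in the cited reference; an alternative (also standard) route would be to show that in a discrete space every pair of completely separated sets has disjoint closures in $\p(X)_*$, which characterizes the Stone--\v{C}ech compactification among all compactifications.
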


That is, the Stone-\v{C}ech compactification of a discrete space $X$ is the dual Stone space of the Boolean algebra $\p(X)$. From now on, we will identify the Stone-\v{C}ech compactification $\beta X$ of a discrete topological space $X$ with $\p(X)_*$. Thus, $\beta X=\p(X)_*$.

\section{A characterization of the canonical extensions of Boolean homomorphisms}\label{sec:characterization}

Let $B_1$ and $B_2$ be Boolean algebras and let $h\colon B_1\to B_2$ be a Boolean homomorphism. Then, by the duality between \textbf{BA} and \textbf{BS}, we have the dual $h_*\colon(B_2)_*\to(B_1)_*$ of $h$. Recall that $(B_i)_*=\Uf(B_i)$ for $i=1,2$. For every $i=1,2$, consider the Stone-\v{C}ech compactification $\beta\left(\Uf(B_i)\right)$ of the discrete space $\Uf(B_i)$. For $i=1,2$, we write $\beta_i\colon\Uf(B_i)\to\beta(\Uf(B_i))$ for the corresponding homeomorphic embeddings. Now we consider the composition $\beta_1\circ h_*\colon\Uf(B_2)\to\beta(\Uf(B_1))$. The map $\beta_1\circ h_*$ is trivially continuous. Then, by Theorem \ref{theo:unique extension}, there exists a unique continuous map $(\beta_1\circ h_*)^{\beta}\colon\beta(\Uf(B_2))\to\beta(\Uf(B_1))$ such that $(\beta_1\circ h_*)^{\beta}\circ\beta_2=\beta_1\circ h_*$, see Figure \ref{fig:beta ext composition}. We denote $h_*^{\beta}:=(\beta_1\circ h_*)^{\beta}$. 
\begin{figure}
\centering
  \begin{tikzpicture}
\node (X) at (-2,1) {$\Uf(B_2)$};
\node (Y) at (2,1) {$\beta(\Uf(B_1))$};
\node (V) at (-2,-1) {$\beta(\Uf(B_2))$};
\path[->,>=angle 90]
(X) edge node[above] {$\beta_1\circ h_*$} (Y);
\path[->,>=angle 90]
(X) edge node[left] {$\beta_2$} (V);
\path[->,dashed,>=angle 90]
(V) edge node[right] {$(\beta_1\circ h_*)^{\beta}$} (Y);
\end{tikzpicture}
\caption{}
\label{fig:beta ext composition}
\end{figure}
Thus 
\begin{equation}\label{equa:commutes}
h_*^\beta\circ\beta_2=\beta_1\circ h_*
\end{equation} 
and hence we obtain the commutative diagram in Figure \ref{fig:beta ext composition 2}.

\begin{figure}
\centering
  \begin{tikzpicture}
\node (X) at (-2,1) {$\Uf(B_2)$};
\node (Y) at (2,1) {$\Uf(B_1)$};
\node (V) at (-2,-1) {$\beta(\Uf(B_2))$};
\node (W) at (2,-1) {$\beta(\Uf(B_1))$};
\path[->,>=angle 90]
(X) edge node[above] {$h_*$} (Y);
\path[->,>=angle 90]
(X) edge node[left] {$\beta_2$} (V);
\path[->,dashed,>=angle 90]
(V) edge node[below] {$h_*^{\beta}$} (W);
\path[->,>=angle 90]
(Y) edge node[right] {$\beta_1$} (W);
\end{tikzpicture}
\caption{}
\label{fig:beta ext composition 2}
\end{figure}

Now recall that $\beta(\Uf(B_i))=\p(\Uf(B_i))_*$ for $i=1,2$, and thus we have
\begin{equation}
h_*^\beta\colon\p(\Uf(B_2))_*\to\p(\Uf(B_1))_*.
\end{equation} 
Moreover, recall that $\widehat{\ph_i}\colon\p(\Uf(B_i))\to\p\left(\beta(\Uf(B_i))\right)$ is given by $\widehat{\ph_i}(A)=\{\nabla\in\beta(\Uf(B_i)): A\in\nabla\}$, for $i=1,2$. Then, we can consider the Boolean dual of $h_*^{\beta}$:
\begin{equation}
(h_*^\beta)^*\colon\p(\Uf(B_1))\to\p(\Uf(B_2))
\end{equation}
where
\begin{equation}\label{equa:def f*b*}
(h_*^\beta)^*(A)=B \quad \text{if and only if} \quad \widehat{\ph_2}(B)=(h_*^\beta)^{-1}\left(\widehat{\ph_1}(A)\right).
\end{equation}

We summarize in the diagram of Figure \ref{fig:summarize} the previous constructions.

\begin{figure}
\centering
  \begin{tikzpicture}
\node (X1) at (-4.5,1.25) {$B_1$};
\node (X2) at (-1.5,1.25) {$\Uf(B_1)$};
\node (X3) at (1.5,1.25) {$\beta(\Uf(B_1))$};
\node (X4) at (4.5,1.25) {$\p(\Uf(B_1))$};
\node (Y1) at (-4.5,-1.25) {$B_2$};
\node (Y2) at (-1.5,-1.25) {$\Uf(B_2)$};
\node (Y3) at (1.5,-1.25) {$\beta(\Uf(B_2))$};
\node (Y4) at (4.5,-1.25) {$\p(\Uf(B_2))$};

\draw[->,>=angle 90]
(X1) edge[snake,decoration={amplitude=.4mm,segment length=2mm,post length=1mm}] node[above] {$(.)_*$} (X2);
\draw[->,>=angle 90]
(X2) edge node[above] {$\beta_1$} (X3);
\draw[->,>=angle 90]
(X3) edge[snake,decoration={amplitude=.4mm,segment length=2mm,post length=1mm}] node[above] {$(.)^*$} (X4);

\draw[->,>=angle 90]
(X1) edge node[left] {$h$} (Y1);
\draw[<-,>=angle 90]
(X2) edge node[left] {$h_*$} (Y2);
\draw[<-,>=angle 90]
(X3) edge node[right] {$h_*^\beta$} (Y3);
\draw[->,>=angle 90]
(X4) edge node[right] {$(h_*^\beta)^*$} (Y4);

\draw[->,>=angle 90]
(Y1) edge[snake,decoration={amplitude=.4mm,segment length=2mm,post length=1mm}] node[above] {$(.)_*$}  (Y2);
\draw[->,>=angle 90]
(Y2) edge node[below] {$\beta_2$} (Y3);
\draw[->,>=angle 90]
(Y3) edge[snake,decoration={amplitude=.4mm,segment length=2mm,post length=1mm}] node[above] {$(.)^*$} (Y4);
\end{tikzpicture}
\caption{Summary of the constructions made through of the Stone duality and the Stone-\v{C}ech compactification.}
\label{fig:summarize}
\end{figure}

\begin{remark}\label{rem:aux 1}
Let $A\in\p(\Uf(B_1))$ and $\Delta\in\beta(\Uf(B_2))$. Then,
\[
\Delta\in(h_*^\beta)^{-1}\left(\widehat{\ph_1}(A)\right) \iff h_*^\beta(\Delta)\in\widehat{\ph_1}(A) \iff A\in h_*^\beta(\Delta).
\]
\end{remark}

Recall that $B_i^\sigma=\p(\Uf(B_i))$ for $i=1,2$ and the canonical extension $h^\sigma\colon\p(\Uf(B_1))\to\p(\Uf(B_2))$ of $h$ is defined by  \eqref{equa:def ext map 2}.

Now, we are ready to establish and prove the main result of this paper.

\begin{theorem}\label{theo:principal}
Let $h\colon B_1\to B_2$ be a Boolean homomorphism between Boolean algebras. Then $h^\sigma=(h_*^\beta)^*$.
\end{theorem}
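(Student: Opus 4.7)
My approach is to compute both sides of the claimed equality explicitly and show that, for every $A\in\p(\Uf(B_1))$, each equals $h_*^{-1}(A)=\{u\in\Uf(B_2):h_*(u)\in A\}$.

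First I would unfold $(h_*^\beta)^*(A)$ pointwise. By definition of $\beta_2$, for any $C\in\p(\Uf(B_2))$ we have $u\in C$ if and only if $\beta_2(u)\in\widehat{\ph_2}(C)$. Applying this with $C=(h_*^\beta)^*(A)$ and using \eqref{equa:def f*b*} turns the membership $u\in(h_*^\beta)^*(A)$ into $\beta_2(u)\in(h_*^\beta)^{-1}\!\left(\widehat{\ph_1}(A)\right)$. Remark \ref{rem:aux 1} rewrites this as $A\in h_*^\beta(\beta_2(u))$, and the commutation identity \eqref{equa:commutes} gives $h_*^\beta(\beta_2(u))=\beta_1(h_*(u))$; by definition of $\beta_1$, this element contains $A$ exactly when $h_*(u)\in A$. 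Hence $(h_*^\beta)^*(A)=h_*^{-1}(A)$.

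Next I would unfold $h^\sigma(A)$ from \eqref{equa:def ext map 2}. A point $u\in\Uf(B_2)$ lies in $h^\sigma(A)$ precisely when there is an $F\in\Fi(B_1)$ with $h[F]\subseteq u$ and $\bigcap\ph_1[F]\subseteq A$; the first condition is equivalent to $F\subseteq h^{-1}(u)=h_*(u)$. For the forward direction, since $h$ is a Boolean homomorphism, $h_*(u)$ is an ultrafilter of $B_1$ extending $F$, so $h_*(u)\in\bigcap\ph_1[F]\subseteq A$. For the converse, assume $h_*(u)\in A$ and take $F:=h_*(u)$; by maximality $\bigcap\ph_1[h_*(u)]=\{h_*(u)\}\subseteq A$, while $F\subseteq h_*(u)$ trivially. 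Therefore $h^\sigma(A)=h_*^{-1}(A)$, matching the previous computation and yielding the theorem.

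The main potential obstacle is purely bookkeeping: one must consistently juggle the identifications $B_i^\sigma\cong\p(\Uf(B_i))$, $\beta(\Uf(B_i))\cong\p(\Uf(B_i))_*$, and the distinction between an ultrafilter $u$ of $B_i$ and its associated principal ultrafilter $\beta_i(u)$ of $\p(\Uf(B_i))$. Once these are untangled via Remark \ref{rem:aux 1} and \eqref{equa:commutes}, no further use of density or compactness of the canonical extension is needed, since the witnessing filter in the canonical extension formula can be chosen to be the ultrafilter $h_*(u)$ itself.
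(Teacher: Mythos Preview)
Your proposal is correct and follows essentially the same route as the paper: you use the chain \eqref{equa:commutes}--Remark~\ref{rem:aux 1}--\eqref{equa:def f*b*} to identify $(h_*^\beta)^*(A)$ with $\{u:h_*(u)\in A\}$, and then the same filter argument (with $F=h_*(u)$ for the converse) to identify $h^\sigma(A)$ with the same set. The only cosmetic difference is that you name the common value $h_*^{-1}(A)$ explicitly, whereas the paper proves the two inclusions $h^\sigma(A)\subseteq(h_*^\beta)^*(A)$ and $(h_*^\beta)^*(A)\subseteq h^\sigma(A)$ directly.
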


\begin{proof}
Let $A\in B_1^\sigma=\p(\Uf(B_1))$. Let $B\in B_2^\sigma=\p(\Uf(B_2))$ be such that
\begin{equation}\label{equa:aux 1}
(h_*^\beta)^*(A)=B\iff \widehat{\ph_2}(B)=(h_*^\beta)^{-1}\left(\widehat{\ph_1}(A)\right).
\end{equation}

Let $v\in\Uf(B_2)$. From \eqref{equa:commutes}, Remark \ref{rem:aux 1} and by \eqref{equa:aux 1}, we obtain the following equivalences:
\begin{multline}\label{equa:aux 2}
h_*(v)\in A \iff A\in\beta_1(h_*(v))=(\beta_1\circ h_*)(v) \iff A\in h_*^\beta(\beta_2(v))\\
 \iff \beta_2(v)\in(h_*^\beta)^{-1}\left(\widehat{\ph_1}(A)\right) \iff \beta_2(v)\in\widehat{\ph_2}(B)\\
 \iff v\in B \iff v\in(h_*^\beta)^*(A).
\end{multline}

Now, we assume that $v\in h^\sigma(A)$. Then, by \eqref{equa:def ext map 2}, there is $F\in\Fi(B_1)$ such that $\bigcap\ph_1[F]\subseteq A$ and $v\in\bigcap\{\ph_2(h(a)): a\in F\}$. Thus $v\in\ph_2(h(a))$ for all $a\in F$, and this implies that $F\subseteq h^{-1}[v]$. As $h$ is a Boolean homomorphism, we have that $h_*(v)=h^{-1}[v]\in\Uf(B_1)$. Then, since $\bigcap\ph_1[F]\subseteq A$, it follows that $h_*(v)\in A$. Thus, by \eqref{equa:aux 2}, we obtain $v\in(h_*^\beta)^*(A)$. Hence, we have proved that $h^\sigma(A)\subseteq(h_*^\beta)^*(A)$.

In order to prove the inverse inclusion, assume that $v\in(h_*^\beta)^*(A)$. By \eqref{equa:aux 2}, we have $h_*(v)\in A$. Let $F:=h_*(v)=h^{-1}[v]\in\Uf(B_1)$ and so, in particular, $F\in\Fi(B_1)$. Since $F$ is maximal, it follows that $\bigcap\ph_1[F]=\{F\}$. Then $\bigcap\ph_1[F]\subseteq A$. Let $a\in F=h^{-1}[v]$. Thus $v\in\ph_2(h(a))$. Hence $v\in\bigcap\{\ph_2(h(a)): a\in F\}$. Then, we have proved that there exists $F\in\Fi(B_1)$ such that $\bigcap\ph_1[F]\subseteq A$ and $v\in\bigcap\{\ph_2(h(a)): a\in A$. Hence, by \eqref{equa:def ext map 2}, we have $v\in h^\sigma(A)$. We thus obtain $(h_*^\beta)^*(A)\subseteq h^\sigma(A)$. Therefore, $(h_*^\beta)^*(A) = h^\sigma(A)$.
\end{proof}

Let $X$ and $Y$ be discrete topological spaces and let $f\colon X\to Y$ be a function. Then, by Theorem \ref{theo:unique extension}, there exists a continuous function $f^{\beta}\colon\beta X\to\beta Y$ such that the following diagram

\begin{figure}[h]
\centering
  \begin{tikzpicture}
\node (X) at (-1,1) {$X$};
\node (Y) at (1,1) {$Y$};
\node (bX) at (-1,-1) {$\beta X$};
\node (bY) at (1,-1) {$\beta Y$};
\path[->,>=angle 90]
(X) edge node[above] {$f$} (Y);
\path[->,>=angle 90]
(X) edge node[left] {$\beta_X$} (bX);
\path[->,>=angle 90]
(Y) edge node[right] {$\beta_Y$} (bY);
\path[->,dashed,>=angle 90]
(bX) edge node[below] {$f^{\beta}$} (bY);
\end{tikzpicture}
\caption{A function between discrete spaces and their corresponding Stone-\v{C}ech-compactifications}
\label{fig:beta extension 2}
\end{figure}
\noindent commutes. Recall (Proposition \ref{prop:SC-comp discrete spaces}) that $\beta X=\p(X)_*=\Uf\left(\p(X)\right)$.

\begin{lemma}[\cite{HiStr12}]\label{lem:aux 1}
In the above hypotheses, we have:
\begin{enumerate}[(1)]
	\item $f^{\beta}(\nabla)=\{B\subseteq Y: f^{-1}[B]\in\nabla\}$, for all $\nabla\in\beta X$.
	\item If $\nabla\in\beta X$ and $A\in\nabla$, then $f[A]\in f^{\beta}(\nabla)$.
\end{enumerate}
\end{lemma}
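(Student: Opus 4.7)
The plan is to prove (1) by exploiting the uniqueness in the universal property (Theorem \ref{theo:unique extension}), and then deduce (2) as an easy consequence of (1).

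For part (1), I would define the candidate map $\widetilde{f}\colon\beta X\to\beta Y$ by $\widetilde{f}(\nabla):=\{B\subseteq Y:f^{-1}[B]\in\nabla\}$, and then verify three things in turn. First, I would check that $\widetilde{f}(\nabla)$ really is an ultrafilter on $\p(Y)$, i.e., an element of $\beta Y=\Uf(\p(Y))$. This is routine because $f^{-1}\colon\p(Y)\to\p(X)$ preserves all Boolean operations (intersection, union, complement), so the preimage along $\nabla$ inherits the ultrafilter properties. Second, I would check that $\widetilde{f}$ is continuous, by evaluating $\widetilde{f}^{-1}$ on the basic clopens of $\beta Y=\p(Y)_*$: for any $B\subseteq Y$,
\[
\widetilde{f}^{-1}(\widehat{\ph_2}(B))=\{\nabla\in\beta X:B\in\widetilde{f}(\nabla)\}=\{\nabla\in\beta X:f^{-1}[B]\in\nabla\}=\widehat{\ph_1}(f^{-1}[B]),
\]
which is a basic clopen in $\beta X$.

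Third, I would verify the triangle $\widetilde{f}\circ\beta_X=\beta_Y\circ f$. For $x\in X$, using the description of $\beta_X$ from \eqref{equa:beta}, one computes
\[
\widetilde{f}(\beta_X(x))=\{B\subseteq Y:f^{-1}[B]\in\beta_X(x)\}=\{B\subseteq Y:x\in f^{-1}[B]\}=\{B\subseteq Y:f(x)\in B\}=\beta_Y(f(x)).
\]
Since $\beta Y$ is a compact Hausdorff space and $\beta_Y\circ f\colon X\to\beta Y$ is continuous (trivially, as $X$ is discrete), the uniqueness part of Theorem \ref{theo:unique extension} forces $\widetilde{f}=f^\beta$, which is exactly the formula claimed in (1).

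For part (2), suppose $\nabla\in\beta X$ and $A\in\nabla$. Clearly $A\subseteq f^{-1}[f[A]]$, and since $\nabla$ is a filter, it is upward-closed under inclusion, so $f^{-1}[f[A]]\in\nabla$. By the characterization established in (1), this says precisely that $f[A]\in f^\beta(\nabla)$. The main (very mild) obstacle in the whole argument is just the careful verification that $\widetilde{f}$ lands in $\Uf(\p(Y))$ and is continuous; after that, uniqueness of the Stone-\v{C}ech extension does all the work.
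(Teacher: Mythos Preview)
Your argument is correct. Note, however, that the paper does not actually prove this lemma: it is stated with a citation to \cite{HiStr12} and no proof is given. What you have written is essentially the standard proof one finds in that reference---define the candidate extension via the pushforward of ultrafilters along the Boolean homomorphism $f^{-1}$, check continuity on basic clopens, verify the triangle on principal ultrafilters, and invoke uniqueness from Theorem~\ref{theo:unique extension}; part (2) then drops out from $A\subseteq f^{-1}[f[A]]$ and upward closure. There is nothing to compare against in the paper itself, and your proof stands on its own.
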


\begin{proposition}
Let $X$ and $Y$ be discrete topological spaces and let $f\colon X\to Y$ be a function.
\begin{enumerate}[(1)]
	\item If $f$ is a one-to-one function, then $f^{\beta}$ is a one-to-one function.
	\item If $f$ is onto, then $^{\beta}$ is onto.
	\item If $f$ is a one-to-one correspondence, then $f^{\beta}$ is an homeomorphism.
\end{enumerate}
\end{proposition}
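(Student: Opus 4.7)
The plan is to use the explicit description $f^{\beta}(\nabla)=\{B\subseteq Y:f^{-1}[B]\in\nabla\}$ from Lemma~\ref{lem:aux 1}(1), which reduces everything to elementary manipulations with ultrafilters and preimages on the level of the discrete underlying sets. Part (3) will be essentially free once (1) and (2) are established, since $f^{\beta}$ is continuous and $\beta X,\beta Y$ are compact Hausdorff.

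\textbf{Injectivity.} Suppose $f$ is one-to-one and $\nabla_1,\nabla_2\in\beta X$ are distinct. Pick $A\in\nabla_1\setminus\nabla_2$; since $\nabla_2$ is an ultrafilter, $X\setminus A\in\nabla_2$. The key observation is that one-to-oneness of $f$ gives $f^{-1}[f[A]]=A$. Then $f^{-1}[f[A]]=A\in\nabla_1$, hence $f[A]\in f^{\beta}(\nabla_1)$, while $f^{-1}[f[A]]=A\notin\nabla_2$, hence $f[A]\notin f^{\beta}(\nabla_2)$. Thus $f^{\beta}(\nabla_1)\neq f^{\beta}(\nabla_2)$.

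\textbf{Surjectivity.} Assume $f$ is onto and let $\nabla'\in\beta Y$. I would consider the family $\mathcal{F}_0=\{f^{-1}[B]:B\in\nabla'\}\subseteq\p(X)$. Because $f^{-1}$ preserves finite intersections and surjectivity of $f$ forces $f^{-1}[B]\neq\emptyset$ whenever $B\neq\emptyset$, the family $\mathcal{F}_0$ has the finite intersection property and so can be extended to an ultrafilter $\nabla\in\Uf(\p(X))=\beta X$. Then for each $B\in\nabla'$ we have $f^{-1}[B]\in\nabla$, which by Lemma~\ref{lem:aux 1}(1) means $B\in f^{\beta}(\nabla)$; thus $\nabla'\subseteq f^{\beta}(\nabla)$. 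Since $f^{\beta}(\nabla)$ is also an ultrafilter on $\p(Y)$ (it is a point of $\beta Y$, or directly: $f^{-1}[Y\setminus B]=X\setminus f^{-1}[B]$, so $f^{\beta}(\nabla)$ inherits ultrafilter-ness from $\nabla$), equality $\nabla'=f^{\beta}(\nabla)$ follows.

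\textbf{Homeomorphism.} Part (3) is then immediate: if $f$ is a bijection, then (1) and (2) give that $f^{\beta}$ is a continuous bijection from the compact space $\beta X$ to the Hausdorff space $\beta Y$, and any such map is a homeomorphism. The only step with any real content is surjectivity, and the mild obstacle there is verifying the finite intersection property for $\mathcal{F}_0$; this is exactly where onto-ness of $f$ is used, to ensure $f^{-1}[B]\neq\emptyset$ for $\emptyset\neq B\in\nabla'$.
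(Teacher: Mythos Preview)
Your proof is correct and follows essentially the same approach as the paper: both parts rely on the explicit description of $f^{\beta}$ from Lemma~\ref{lem:aux 1}, exploit the identity $f^{-1}[f[A]]=A$ for injective $f$ in part~(1), and for part~(2) extend the family $\{f^{-1}[B]:B\in\nabla'\}$ to an ultrafilter of $\p(X)$. Your surjectivity argument is in fact slightly more economical than the paper's, since you conclude $\nabla'=f^{\beta}(\nabla)$ directly from the inclusion $\nabla'\subseteq f^{\beta}(\nabla)$ and maximality, whereas the paper passes through the auxiliary filter $\{f[A]:A\in\nabla\}$ before reaching the same conclusion.
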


\begin{proof}
(1) Assume that $f$ is a one-to-one function. Let $\nabla_1,\nabla_2\in\beta X$ be such that $f^{\beta}(\nabla_1)=f^{\beta}(\nabla_2)$. Let $A\in\nabla_1$. By Lemma \ref{lem:aux 1}, we have $f[A]\in f^{\beta}(\nabla_1)$. Thus $f[A]\in f^{\beta}(\nabla_2)$. Then, by Lemma \ref{lem:aux 1} again and since $f$ is a one-to-one function, it follows that $A=f^{-1}\left[f[A]\right]\in\nabla_2$. We have proved that $\nabla_1\subseteq\nabla_2$. Hence, since $\nabla_1$ is an ultrafilter of $\p(X)$, we obtain that $\nabla_1=\nabla_2$. Therefore, $f$ is one-to-one.

(2) Assume that $f$ is onto. Let $\Delta\in\beta Y$. Now let 
\[
\F:=\{A\subseteq X: f^{-1}[B]\subseteq A \ \text{ for some } \ B\in\Delta\}.
\]
It follows straightforward that $\F$ is a filter of the Boolean algebra $\p(X)$. Since $\Delta$ is an ultrafilter of the Boolean algebra $\p(Y)$ and $f$ is an onto function, it follows that $\F$ is a proper filter of $\p(X)$. Let $\nabla$ be an ultrafilter of $\p(X)$ such that $\F\subseteq\nabla$. Since $f$ is onto, it follows that the set $\{f[A]: A\in\nabla\}$ is a proper filter of $\p(Y)$ and $\Delta\subseteq\{f[A]: A\in\nabla\}$. Hence, because $\Delta$ is an ultrafilter of $\p(Y)$, we obtain that $\Delta=\{f[A]: A\in\nabla\}$. Now we are ready to show that $f^{\beta}(\nabla)=\Delta$. Let $B\in f^{\beta}(\nabla)$. By Lemma \ref{lem:aux 1}, we have $f^{-1}[B]\in\nabla$. So $f[f^{-1}[B]]\in\Delta$. Since $f$ is onto, it follows that $B\in\Delta$. Now, let $B\in\Delta$. Thus $B=f[A]$ for some $A\in\nabla$. As $A\subseteq f^{-1}\left[f[A]\right]=f^{-1}[B]$, we have $f^{-1}[B]\in\nabla$. Then, by Lemma \ref{lem:aux 1}, we obtain $B\in f^{\beta}(\nabla)$. Hence, $f^{\beta}(\nabla)=\Delta$. Therefore, $f^{\beta}$ is onto.

(3) Lastly, assume that $f$ is a one-to-one correspondence. By (1) and (2), we have that $f^{\beta}$ is a one-to-one correspondence. Moreover, we know that $f^{\beta}$ is a continuous map. Then, since the spaces $\beta X$ and $\beta Y$ are Hausdorff and compact, we obtain that $f^{\beta}$ is a homeomorphism.
\end{proof}

Now from the previous proposition, Theorem \ref{theo:principal} and using the diagram in Figure \ref{fig:summarize}, it follows the following corollary.

\begin{corollary}
Let $B_1$ and $B_2$ be Boolean algebras and let $h\colon B_1\to B_2$ be a homomorphism.
\begin{enumerate}[(1)]
	\item If $h$ is a one-to-one homomorphism, then $h^{\sigma}$ is a one-to-one homomorphism.
	\item If $h$ is an onto homomorphism, then $h^{\sigma}$ is an onto homomorphism.
	\item If $h$ is an isomorphism, then $h^{\sigma}$ is an isomorphism.
\end{enumerate}
\end{corollary}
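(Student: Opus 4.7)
The plan is to chain three equivalences along the diagram of Figure \ref{fig:summarize}, pivoting on the identification $h^\sigma=(h_*^\beta)^*$ given by Theorem \ref{theo:principal}. The crucial background fact is that Stone duality converts injectivity and surjectivity into each other: a Boolean homomorphism $k\colon A\to A'$ is one-to-one if and only if its dual $k_*\colon\Uf(A')\to\Uf(A)$ is onto, and $k$ is onto if and only if $k_*$ is one-to-one (i.e., a topological embedding). Combined with the previous proposition, which transports injectivity and surjectivity from $f$ to its Stone-\v{C}ech extension $f^\beta$ for maps between discrete spaces, we get exactly what we need.

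For part (1), suppose $h$ is one-to-one. Then by Stone duality $h_*\colon\Uf(B_2)\to\Uf(B_1)$ is onto. Applying the surjectivity clause of the previous proposition to the map $h_*$ between the discrete spaces $\Uf(B_2)$ and $\Uf(B_1)$, we conclude that $h_*^\beta\colon\beta(\Uf(B_2))\to\beta(\Uf(B_1))$ is onto. A continuous surjection between Stone spaces is dualized by Stone duality into a one-to-one Boolean homomorphism, so $(h_*^\beta)^*\colon\mathcal{P}(\Uf(B_1))\to\mathcal{P}(\Uf(B_2))$ is a one-to-one Boolean homomorphism. By Theorem \ref{theo:principal}, $h^\sigma=(h_*^\beta)^*$, hence $h^\sigma$ is one-to-one.

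For part (2), assume $h$ is onto. Then $h_*$ is one-to-one, and the injectivity clause of the previous proposition gives that $h_*^\beta$ is one-to-one. Since $\beta(\Uf(B_2))$ is compact and $\beta(\Uf(B_1))$ is Hausdorff, $h_*^\beta$ is actually a topological embedding, and then by Stone duality its dual $(h_*^\beta)^*$ is an onto Boolean homomorphism. Theorem \ref{theo:principal} once more yields that $h^\sigma$ is onto. Part (3) is immediate by combining (1) and (2). Note that in both parts, the fact that $h^\sigma$ is actually a Boolean homomorphism (and not merely an order-preserving map) comes for free, since $(h_*^\beta)^*$ is by construction the Stone dual of a continuous map between Stone spaces.

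The argument is essentially a bookkeeping exercise once Theorem \ref{theo:principal} and the preceding proposition are in hand, so I do not anticipate a real obstacle; the only point requiring a tiny bit of care is the translation between ``one-to-one continuous map between compact Hausdorff spaces'' and ``topological embedding,'' which is standard and justifies the application of Stone duality in the surjective direction in part (2).
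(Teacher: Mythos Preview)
Your proposal is correct and follows precisely the route the paper itself sketches: the paper states only that the corollary follows from the previous proposition, Theorem \ref{theo:principal}, and the diagram in Figure \ref{fig:summarize}, and your argument is exactly the unpacking of that sentence, chasing injectivity/surjectivity along the chain $h\rightsquigarrow h_*\rightsquigarrow h_*^\beta\rightsquigarrow (h_*^\beta)^*=h^\sigma$ via Stone duality and the Stone--\v{C}ech proposition.
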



\end{document}